\newtheorem{thm}{Theorem}%[section]
\newtheorem{cor}[thm]{Corollary}
\newtheorem{prop}[thm]{Proposition}
\newtheorem{complem}[thm]{Complement}%!!!!!!!!!!!!!!!!!!!!!!
\theoremstyle{definition}
\newtheorem{defn}[thm]{Definition}
\newtheorem{say}[thm]{}
\newtheorem{rem}[thm]{Remark}          
\newtheorem*{ack}{Acknowledgments}      % \renewcommand{\theack}{} 
\newtheorem{notation}[thm]{Notation}   
\newtheorem{defn-thm}[thm]{Definition--Theorem}  %!!!!!!!!!!!!!!!!!!!!!!!!
\newtheorem{defn-lem}[thm]{Definition--Lemma}  %!!!!!!!!!!!!!!!!!!!!!!!!
\newtheorem{comments}[thm]{Comments}
\theoremstyle{remark}
\let \cedilla =\c
\renewcommand{\c}[0]{{\mathbb C}}  
\renewcommand{\o}[0]{{\mathcal O}} 
\newcommand{\dd}[0]{{\mathbb D}}
\newcommand{\q}[0]{{\mathbb Q}}
\newcommand{\map}[0]{\dasharrow}
\newcommand{\spec}[0]{\operatorname{Spec}}
\newcommand{\gal}[0]{\operatorname{Gal}}
\newcommand{\red}[0]{\operatorname{red}}
\newcommand{\rdown}[1]{\lfloor{#1}\rfloor}
\newcommand{\simq}[0]{\sim_{\q}}
\newcommand{\tsum}[0]{\textstyle{\sum}}
\newcommand{\cdiv}[0]{\operatorname{CDiv}}
\newcommand{\defor}[0]{\operatorname{Def}}
\def\loccoh#1.#2.#3.#4.{H^{#1}_{#2}(#3,#4)}
\DeclareMathAlphabet{\mathchanc}{OT1}{pzc}%
                                {m}{it}
\begin{document}
\bibliographystyle{amsalpha}

%\today

\title{Semi-stable extensions over 1-dimensional bases}
\author{J\'anos Koll\'ar, Johannes Nicaise and Chenyang Xu}

\begin{abstract} 
Given a family of Calabi-Yau varieties over the punctured disc or over the field of Laurent series, we show that, after a finite base change,
the family can be exended across the origin while keeping the canonical class trivial. More generally, we prove similar extension results for families
whose log-canonical class is semi-ample. We use these to show that
the Berkovich and essential skeleta agree for smooth varieties over $\c((t))$
with semi-ample canonical class.
\end{abstract}

\maketitle

Let $C$ be a smooth curve, $C^{\circ}\subset C$ an open subset  and $p^{\circ}:(X^{\circ}, \Delta^{\circ})\to C^{\circ}$
a projective, locally stable family (see Definition \ref{ls.defn}) such that 
$K_{X^{\circ}}+\Delta^{\circ}$ is $p^{\circ}$-nef. 
It is frequently useful to extend it to 
a projective, locally stable family  $p:(X, \Delta)\to C$ such that 
$K_{X}+\Delta$ is $p$-nef. 

This is not possible in general, but  conjecturally 
such an extension exists after pulling back to a suitable finite (ramified)
cover $B\to C$. Currently the latter is known if $C$ is a curve defined  over
a field of characteristic 0,
the fibers of $p^{\circ}$ over $C^{\circ}$ are divisorial log terminal  and $K_{X^{\circ}}+\Delta^{\circ}$ is   $p^{\circ}$-semi-ample.
In increasingly  general forms this has been proved in
\cite{del-mum, ksb, ale-pairs, bchm, MR3032329}.

In many applications, one would like to know  such results  when 
the smooth curve $C$ is replaced by
a 1-dimensional regular scheme or by a (non-compact) Riemann surface.
The cases when  $C=\spec \c[[t]]$ or $C=\dd$ (the complex unit disc) appear especially frequently.

The current proofs rely heavily on the Minimal Model Program, which
is expected to hold in rather general settings. However, complete proofs are known 
only for schemes of finite type,  although the relevant vanishing theorems have been established over  1-dimensional power series rings   as well \cite{MN15}.

It is possible to approximate a general family by an algebraic family to arbitrary order and try to construct the extension by going systematically between
the general family and its approximations. For Calabi--Yau families this was 
worked out in \cite[Sec.4.2]{NX16}, but the general case seems to present technical difficulties. 

 Instead of  approximation, we embed a 1-dimensional formal or analytic family into a higher dimensional algebraic family. Then we solve a special case of the algebraic extension problem over higher dimensional bases and induce the formal or analytic extension form it.

\medskip
From now on all schemes are   over a field of characteristic 0.

\begin{notation}[Global extension problem]\label{basic.setup}
We work in one of the following set-ups.

(Algebraic case)  $C$ is a Noetherian, excellent, 1-dimensional, regular scheme, $C^{\circ}\subset C$ is 
 a dense, open subscheme and $Z:=C\setminus C^{\circ}$.

(Analytic case)   $C$ is a 
(not necessarily compact) Riemann surface and $C^{\circ}\subset C$  is an open subset with finite complement  $Z:=C\setminus C^{\circ}$.

In both cases we also have a  projective morphism $p:(X, \Delta)\to C$
whose restriction 
$p^{\circ}:(X^{\circ}, \Delta^{\circ}):=
\bigl(p^{-1}(C^0), \Delta|_{p^{-1}(C^0)}\bigr)\to C^{\circ}$
is locally stable  with $\q$-factorial, dlt fibers 
 and  $K_{X^{\circ}}+\Delta^{\circ}$ is $p^{\circ}$-semi-ample.

The notions {\it dlt}, $G$-equivariantly dlt,
 {\it qdlt} and $G$-equivariantly  qdlt are discussed in  Definition \ref{ls.defn}; see \cite{km-book, kk-singbook, dkx} for more systematic treatments. 
\end{notation}

The precise extension results are the following.

\begin{thm}[Locally-stable version] \label{main.alg.thm} 
Using the  assumptions of Notation \ref{basic.setup}, 
for every $c_i\in Z$ 
there are natural numbers $m(c_i)$ such that the following holds.

Let $\tau:B\to C$ be  a finite, surjective, Galois  morphism  
such that $m(c_i)$ divides its ramification index over $c_i$ for every $c_i\in Z$.
Then there is 
a projective morphism  $p_B:(X_B, \Delta_B)\to B$ with the following properties.
\begin{enumerate}
\item Over $B^{\circ}:=\tau^{-1}(C^{\circ})$, the morphism $p_B$   is isomorphic to
the pulled-back morphism $(X^{\circ}, \Delta^{\circ})\times_CB^{\circ}\to B^{\circ}$,  
\item $(X_B, \Delta_B+F_{Z,B})$ is dlt, where $F_{Z,B}\subset X_B$ denotes the sum of the fibers of $p_B$ over the points in  $ \tau^{-1}(Z)$,  and
\item $K_{X_B}+\Delta_B+F_{Z,B}$  is $p_B$-semi-ample.
\end{enumerate}
\noindent Furthermore, if $G$ is a finite group acting on $p:(X, \Delta)\to C$
and on $\tau:B\to C$ then we can choose  $p_B:(X_B, \Delta_B)\to B$ such that, in addition, 
\begin{enumerate}\setcounter{enumi}{3}
\item $G_B$ acts on  $p_B:(X_B, \Delta_B)\to B$, where $G_B$ is the natural extension of $G$ by $\gal(B/C)$  and
\item $\bigl(X_B, \Delta_B+F_{Z,B}\bigr)$ is $G_B$-equivariantly dlt.
\end{enumerate}
\end{thm}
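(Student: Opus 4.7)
The plan is to reduce to a local extension problem at each boundary point $c_i \in Z$ and to solve it by embedding the local formal or analytic family as a transverse slice of a higher-dimensional algebraic family of finite type, to which the known MMP-based extension results apply. Since the construction of $p_B$ is \'etale-local on $B$, it suffices to produce, for each $c_i \in Z$ and each preimage in $\tau^{-1}(c_i)$, a projective locally stable extension of $(X^\circ, \Delta^\circ)\times_C B^\circ$ across that preimage with dlt singularities and semi-ample log canonical class, and then glue with the pullback over $B^\circ$.

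Near each $c_i$ the base is isomorphic to $\spec \c[[t]]$ (algebraic case, after completion) or to an open neighborhood of $c_i$ in $C$ (analytic case). Following and extending the argument of \cite[Sec.4.2]{NX16} from the Calabi--Yau to the log-semi-ample setting, I would construct a smooth finite-type $\c$-variety $S_i$, a smooth divisor $D_i\subset S_i$ through a closed point $s_i\in S_i$, a projective locally stable morphism $(Y_i,\Gamma_i)\to S_i\setminus D_i$ with $\q$-factorial dlt fibers and log canonical semi-ample over $S_i\setminus D_i$, together with a morphism $\gamma_i$ from a neighborhood of $c_i$ into $S_i$ sending $c_i$ to $s_i$ transversally to $D_i$, such that $\gamma_i^*(Y_i,\Gamma_i)$ recovers the restriction of $(X,\Delta)$ near $c_i$. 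For the equivariance in (4)--(5), these choices must be made compatibly along each $G$-orbit in $Z$.

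Given this algebraization, the extension problem now concerns the finite-type algebraic family $(Y_i,\Gamma_i)\to S_i\setminus D_i$, to which the MMP over schemes of finite type \cite{bchm, MR3032329} applies. After a cyclic base change $S_i'\to S_i$ of some degree $m_i$ totally ramified along $D_i$, one obtains a projective extension $(Y_i',\Gamma_i')\to S_i'$ across the reduced preimage $F_i'$ of $D_i$ with $K_{Y_i'}+\Gamma_i'+F_i'$ dlt and semi-ample over $S_i'$. The required integers are $m(c_i):=m_i$. Once $\tau\colon B\to C$ has ramification index divisible by $m(c_i)$ at each preimage of $c_i$, the map $\gamma_i$ lifts to a map from a neighborhood of that preimage in $B$ into $S_i'$, and pulling back $(Y_i',\Gamma_i')$ yields the desired local extension. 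Gluing produces $p_B\colon(X_B,\Delta_B)\to B$ satisfying (1)--(3); the equivariant choices above combined with $G$-equivariant MMP give (4)--(5).

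The main obstacle is the algebraization step: realizing a 1-parameter formal or analytic locally stable family, with $\q$-factorial dlt fibers and log canonical semi-ample, as a transverse slice of a finite-type algebraic family with the same qualitative properties, and doing so $G$-equivariantly. This requires a nontrivial extension of the Calabi--Yau argument of \cite[Sec.4.2]{NX16}, in particular controlling the compactification and the log structure over $S_i$ so that the MMP output descends, after pullback along the lifted $\gamma_i$, to the right dlt log-semi-ample extension over the 1-parameter base.
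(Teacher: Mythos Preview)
Your high-level strategy---reduce to a local problem at each $c_i$, embed the local family into a higher-dimensional algebraic family, run the MMP there, pull back, and glue---is exactly the paper's strategy. The equivariant refinements (4)--(5) are also handled the same way. But the gap you flag in your last paragraph is real, and the paper closes it by a different mechanism than the one you sketch.

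Where you propose to algebraize first and then base-change the higher-dimensional base along a divisor $D_i$, the paper reverses the order: it performs semi-stable reduction on the \emph{one-dimensional} family first (a log resolution, a cyclic cover of order $m(c_i)$ to make the central fiber reduced and toroidal, then \cite{kkms} to reach an snc model), producing a family $p'':(Y'',\Delta_Y'')\to C''$ that is already projective and locally stable over the whole local base. The algebraization is then automatic: one uses the universal locally stable deformation space $\defor^{\rm ls}(Y''_0,\Delta''_0)$ of the resulting central fiber, and the family $p''$ gives a moduli map $m''_C:C''\to \defor^{\rm ls}(Y''_0,\Delta''_0)$. Taking the Zariski closure of its image, resolving, and pulling back the universal family yields a projective, \emph{everywhere locally stable} family $p_W:(Y_W,\Delta_W)\to W$ over a smooth finite-type base with $C''$ mapping in with dense image. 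Now the MMP \cite{bchm,MR3032329} runs over $W$, and because $p_W$ is locally stable, the paper shows (Corollary~\ref{ss.mmp.over.smooth.cor} and Proposition~\ref{mmp.com.base.ch.prop}) that the output stays locally stable and that base-change along $m''_C$ commutes with the MMP steps.

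This ordering is what makes the algebraization step routine rather than ``a nontrivial extension'' of \cite[Sec.4.2]{NX16}: after semi-stable reduction the central fiber is slc, so it has a universal deformation space that serves as the higher-dimensional base, and there is no divisor $D_i$ to extend across. Your proposed route, algebraizing the original (possibly non-slc) family and then extending across a divisor in a higher-dimensional base, is precisely the approach the paper's introduction says ``seems to present technical difficulties'' and deliberately avoids.
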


Note that (\ref{main.alg.thm}.2) implies that $p_B$ is
locally stable along $F_{Z,B}$, in particular the fibers $F_{Z,B}$ are reduced.
Together with (\ref{main.alg.thm}.1)
this implies that $p_B:(X_B, \Delta_B)\to B$ is locally stable.
(Its fibers are  semi-dlt in the terminology of \cite[Sec.5.4]{kk-singbook}.)
Furthermore, any fiber is a $p_B$-trivial divisor, hence 
(\ref{main.alg.thm}.3) is equivalent to saying that  
 $K_{X_B}+\Delta_B$ is  $p_B$-semi-ample.
For our applications it is important to know that, at least in the local case,
the above model is obtained by an MMP from an snc model.

\begin{complem} \label{main.thm.compl}
Using the  above notation,  assume in addition that $C$ is local with closed point $0$ and $C^{\circ}=C\setminus \{0\}$. Then one can choose
$p_B:(X_B, \Delta_B)\to B$ with the following properties.
\begin{enumerate}
\item There is a projective resolution $(X^1_B, \Delta^1_B)\to (X,\Delta)\times_CB$
such that the pair $(X^1_B, \Delta^1_B+ F^1_{Z,B})$ is  snc.
\item There is a sequence 
$\phi_i:  \bigl(X^i_B, \Delta^i_B\bigr)\map \bigl(X^{i+1}_B, \Delta^{i+1}_B\bigr)$
of $K_{X^i_B}+\Delta^i_B$-negative contractions and flips starting with
$i=1$. 
\item The sequence terminates at  $(X_B, \Delta_B)$ as in
Theorem \ref{main.alg.thm}.
\item The resolution and the sequence  can be chosen to be $\gal(B/C)$-equivariant.
\end{enumerate}
\end{complem}

Taking the quotient of $p_B:(X_B, \Delta_B)\to B$ by $\gal(B/C)$
 we obtain the following  without base change.

\begin{cor}[Base change free version] \label{main.alg.thm.2} 
Using the  assumptions of Notation \ref{basic.setup}, 
 there is  a 
projective morphism  $p_C:(X_C, \Delta_C)\to C$ such that 
\begin{enumerate}
\item over $C^{\circ}$ it is isomorphic to 
$p^{\circ}:(X^{\circ}, \Delta^{\circ})\to C^{\circ}$,  
\item $\bigl(X_C, \Delta_C+\red(F_{Z,C})\bigr)$ is qdlt where $F_{Z,C}\subset X_C$ denotes the sum of the fibers of $p_C$ over the points in $Z$  and
\item $K_{X_C}+\Delta_C+\red(F_{Z,C})$  is $p_C$-semi-ample.
\end{enumerate}
\noindent Furthermore, if $G$ is a finite group acting on  $p:(X, \Delta)\to C$
then we can choose  $p_C:(X_C, \Delta_C)\to C$ such that, in addition, 
\begin{enumerate}\setcounter{enumi}{3}
\item $G$ acts on  $p_C:(X_C, \Delta_C)\to C$ and
\item $\bigl(X_C, \Delta_C+\red(F_{Z,C})\bigr)$ is $G$-equivariantly qdlt.
\end{enumerate}
\end{cor}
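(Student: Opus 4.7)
The plan is to apply Theorem~\ref{main.alg.thm} to $p:(X,\Delta)\to C$ equipped with its $G$-action (taking $G=1$ if none is specified), producing a finite Galois cover $\tau:B\to C$ with group $H:=\gal(B/C)$ and a projective morphism $p_B:(X_B,\Delta_B)\to B$ that is $G_B$-equivariantly dlt along the reduced fibers $F_{Z,B}$ and whose log-canonical divisor is $p_B$-semi-ample. I would then take the Galois quotient by $H$ and verify that the resulting pair over $C$ has the required properties.

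First I would construct $\pi:X_B\to X_C:=X_B/H$ as a projective $C$-scheme; this exists because $p_B$ is projective and $H$ is finite. Since $\Delta_B$ is $G_B$-invariant it is in particular $H$-invariant and descends to a $\q$-divisor $\Delta_C$ on $X_C$, while the residual quotient $G=G_B/H$ acts on $p_C:(X_C,\Delta_C)\to C$. Over $C^{\circ}$ the cover $\tau$ is \'etale and $H$ acts freely on $X_B^{\circ}$, so the quotient recovers $(X^{\circ},\Delta^{\circ})$, giving~(1) and~(4).

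Next I would exploit the Riemann--Hurwitz formula for $\pi$. Because $(X_B,\Delta_B+F_{Z,B})$ is locally stable with reduced fibers, the ramification of $\pi$ is supported on $F_{Z,B}$, and along each component the ramification index of $\pi$ equals the ramification index of $\tau$ at the corresponding point of $Z$. This yields the key identity
\[
K_{X_B}+\Delta_B+F_{Z,B}\;=\;\pi^{*}\bigl(K_{X_C}+\Delta_C+\red(F_{Z,C})\bigr),
\]
together with $\pi^{*}\red(F_{Z,C})=F_{Z,B}$. Combining the $p_B$-semi-ampleness of the left side given by (\ref{main.alg.thm}.3) with the finiteness of $\pi$ and descent of $H$-invariant sections via the Reynolds operator in characteristic~$0$ yields~(3).

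For (2) and (5) I would pass to local toroidal charts at the generic points of the strata of $(X_B,\Delta_B+F_{Z,B})$. The $G_B$-equivariantly dlt hypothesis supplies a $G_B$-invariant snc open set meeting every log canonical center, with $H$ acting toroidally relative to the stratification. Taking the quotient by $H$ produces finite abelian quotients of the local snc model, and by the local characterization of qdlt pairs in \cite{dkx} this is precisely the class of $G$-equivariantly qdlt singularities, provided the boundary downstairs is $\red(F_{Z,C})$ rather than $F_{Z,C}$. The main obstacle is exactly this last verification: one must track the inertia of $\tau$ at each $b\in\tau^{-1}(Z)$, confirm that it acts toroidally on every log canonical center of $(X_B,\Delta_B+F_{Z,B})$ meeting the corresponding fiber component, and check that the resulting quotient singularities fall within the qdlt class. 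Everything else reduces to standard descent for projective morphisms under finite group actions.
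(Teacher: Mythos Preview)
Your approach---taking the Galois quotient of the model from Theorem~\ref{main.alg.thm} by $H=\gal(B/C)$, reading off the pull-back identity for the log canonical divisor via Riemann--Hurwitz, and appealing to the qdlt characterization in \cite{dkx}---is exactly what the paper does in Paragraph~\ref{step.7.say}. One minor correction: the ramification index of $\pi$ along a given component of $F_{Z,B}$ need not equal the ramification index of $\tau$ at the base point (the inertia group of $b$ may permute the components of $(F_{Z,B})_b$ nontrivially, so the stabilizer of a single component can be smaller), but this does not affect the argument since the identity $K_{X_B}+\Delta_B+F_{Z,B}=\pi^{*}\bigl(K_{X_C}+\Delta_C+\red(F_{Z,C})\bigr)$ follows from Riemann--Hurwitz whatever the individual ramification indices are.
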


Unlike in Theorem \ref{main.alg.thm},   the fibers $F_{Z,C}$ are not reduced
and so  $p_C:(X_C, \Delta_C)\to C$ is not locally stable.
Most likely in (\ref{main.alg.thm.2}.2) one can replace qdlt with dlt.
Note also that 
(\ref{main.alg.thm.2}.3) does not imply that 
 $K_{X_C}+\Delta_C$ is  numerically $p_C$-semiample since $\red(F_{Z,C})$
is not numerically $p_C$-trivial. The following more precise local variant 
follows from Complement \ref{main.thm.compl}.

\begin{complem} \label{main.thm.2.compl}
Using the  above notation,  assume in addition that $C$ is local with closed point $0$ and $C^{\circ}=C\setminus \{0\}$. Then  the following hold.
\begin{enumerate}
\item There is a projective morphism $(X^1_C, \Delta^1_C)\to (X,\Delta)$
such that the pair  $(X^1_C, \Delta^1_C+\red F^1_{Z,C})$ is toroidal and hence qdlt.
(In fact, globally the quotient of an snc pair $(X^1_B, \Delta^1_B+ F^1_{Z,B})$)
\item There is a sequence 
$\phi_i:  \bigl(X^i_C, \Delta^i_C+\red F^i_{Z,C}\bigr)\map \bigl(X^{i+1}_C, \Delta^{i+1}_C+\red F^{i+1}_{Z,C}\bigr)$
of $(K_{X^i_C}+\Delta^i_C+\red F^i_{Z,C})$-negative contractions and flips starting with $i=1$. 
%(The steps, however, may correspond to extremal faces, rather than rays.)
\item The sequence terminates at  $(X_C, \Delta_C)$ as in
Theorem \ref{main.alg.thm.2}.
\end{enumerate}
\end{complem}

\begin{comments} \label{comment.3}
The above results are expected to hold if we only assume that
the fibers over $C^{\circ}$ are proper, slc and with nef log canonical class. 
Our conditions are imposed by current limitations of the MMP. 

If the fibers are not assumed $\q$-factorial, then
the  arguments still work but the resulting  $p_B:(X_B,\Delta_B)\to B$   is isomorphic (over $B^{\circ} $)  to a small $\q$-factorialization of
the pulled-back morphism $(X^{\circ}, \Delta^{\circ})\times_CB^{\circ}\to B^{\circ}$.

By formal gluing theory  (see, for instance, \cite[1.1]{MR1432058})
it is enough to prove the  Complements for the completion (or henselisation) of  the local scheme $(0, C)$.  We need this during the proof in Paragraph \ref{step.3.say}. 

Note that in (\ref{main.thm.compl}.2) and (\ref{main.thm.2.compl}.2) we do not establish that the $\phi_i$  correspond to extremal rays
(rather than faces) but most likely this can be arranged.

The proof of Theorem \ref{main.alg.thm} shows that the expected extension theorems hold over higher dimensional bases, once the global existence theorem of
the moduli space of higher dimensional varieties of general type is written up in \cite{k-modbook}. In writing this note we focus on 1 dimensional bases since they are frequently used in the study of degenerations of
Calabi--Yau varieties and there are complete references available for the background results needed.

One could argue that over a general base scheme local projectivity is more natural than global projectivity and our proof also work if
$p$ is only locally projective.
In this case of course $p_B$ is only locally projective.

It would be more natural to start with a morphism $p^{\circ}:(X^{\circ}, \Delta^{\circ})\to C^{\circ}$ instead of $p:(X, \Delta)\to C$. This is allowed in the algebraic case where a projective family over $C^{\circ}$ extends to a 
projective family over $C$; one can easily induce such an extension from a Hilbert scheme. In the analytic case the difference is, however, substantial.
We still get a morphism from $C^{\circ}$ to a Hilbert scheme but we need to know
that it is meromorphic near $C\setminus C^{\circ}$. This is why we need to start
with a projective (or at least proper and algebraic) family over $C$.
\end{comments}

\begin{ack} This work was started during the workshop
{\it Collapsing
Calabi-Yau Manifolds.} We that the Simons Center for its hospitality and
D.~Abramovich  for helpful
comments and references.
Partial financial support  was provided  to JK   by  the NSF under grant number
 DMS-1362960, to JN  by Starting Grant MOTZETA (project 306610) of the European Research Council.
 and to
CX by the Chinese National Science Fund for Distinguished Young
Scholars.
\end{ack}

\section{MMP for locally stable families over a smooth base}

\begin{defn}\label{ls.defn} 
We follow the usual terminology of \cite{km-book, kk-singbook}
and  assume that the characteristic is 0.
Recall that  `dlt' stands for  divisorial log terminal and `slc'
for semi-log-canonical.

 Let $B$ be a smooth scheme. A morphism  $p:(X, \Delta)\to B$
is called {\it locally stable} if  
$K_{X/B}+\Delta$ is $\q$-Cartier, $p$ is flat  and $\bigl(X_b, \Delta_b)$ is slc for every $b\in B$. The equivalent characterization  (\ref{ss.char.over.samooth.cor}.2) will be especially
 convenient to work with.

(There is a general definition of locally stable morphisms over an arbitrary base $B$, but it is more complicated to state; see \cite{k-modsurv, k-modbook}.)

If $G$ is a finite group acting on a dlt pair  $(X,\Delta)$ then
it called {\it $G$-equivariantly dlt}
or {\it $G$-dlt}  if for every irreducible divisor $D\subset \rdown{\Delta}$
and for every $g\in G$, either $g(D)=D$ or $g(D)\cap D=\emptyset$.
In this case, the quotient $(X/G,\Delta/G)$ need not be dlt. These form the local models of  {\it quotient-dlt} or {\it qdlt} pairs \cite{dkx}.
For practical purposes the following description, proved in
\cite{dkx},  may be the most useful.
\medskip

{\it Definition-Claim \ref{ls.defn}.1.}  A log canonical pair   $(X,\Delta)$ is qdlt iff
there is an open subset $U\subset X$ such that
\begin{enumerate}
\item[(a)] $\bigl(U,  \rdown{\Delta}|_U\bigr)$ is toroidal and
\item[(b)] every  log canonical center of   $(X,\Delta)$ meets $U$.
\end{enumerate}
\medskip

Finally, if $H\subset G$ is a normal subgroup then
$G_1:=G/H$ acts on a qdlt pair $(X/H,\Delta/H)$, these give local models of
{\it $G_1$-equivariantly qdlt}
or {\it $G_1$-qdlt} pairs.

\end{defn}

\begin{prop} Let $(0\in B)$ be a smooth, local scheme of dimension $r$,
$D_1+ \cdots + D_r\subset B$ an snc divisor such that $D_1\cap\cdots \cap D_r=\{0\}$ and $p:(X, \Delta)\to B$ a morphism.
The following are equivalent.
\begin{enumerate}
\item $K_{X/B}+\Delta$ is $\q$-Cartier, $p$ is flat  and $\bigl(X_0, \Delta_0)$ is slc.
\item The pair $\bigl(X, \Delta+p^*D_1+ \cdots + p^*D_r\bigr)$ is slc.
\end{enumerate}
\end{prop}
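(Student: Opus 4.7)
The plan is to iterate (inversion of) adjunction along the reduced Cartier divisors $p^*D_i$ and apply the local criterion of flatness over the regular local base $(0\in B)$. Since $B$ is smooth and $\sum D_i$ is snc, $K_B+\sum D_i$ is Cartier, so the identity
\[
K_X+\Delta+\tsum p^*D_i \;=\; \bigl(K_{X/B}+\Delta\bigr)+p^*\bigl(K_B+\tsum D_i\bigr)
\]
shows that the $\q$-Cartier property of $K_X+\Delta+\sum p^*D_i$ is equivalent to that of $K_{X/B}+\Delta$. The remaining assertions are local on $X$; restricting to smaller local bases, I would reduce to a single closed point $x\in X$ above $0\in B$. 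Fix a regular system of parameters $t_1,\dots,t_r$ of $\o_{B,0}$ with $D_i=(t_i=0)$ and set $f_i:=p^*t_i$.

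For $(2)\Rightarrow(1)$, if $(X,\Delta+\sum p^*D_i)$ is slc then $X$ is demi-normal and each $p^*D_i$ is a reduced effective Cartier divisor, so $f_1$ is a nonzerodivisor on $\o_{X,x}$. Adjunction to $p^*D_1$ then produces the slc pair $\bigl(p^*D_1,\Delta|_{p^*D_1}+\sum_{j\ge 2}p^*D_j|_{p^*D_1}\bigr)$; the different vanishes because each $p^*D_j$ is globally cut out by the single pulled-back function $f_j$. In particular $p^*D_1$ is $S_1$, so $f_2$ is a nonzerodivisor modulo $f_1$. Iterating $r$ times I would get that $(X_0,\Delta_0)$ is slc and that $f_1,\dots,f_r$ is a regular sequence on $\o_{X,x}$. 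The local criterion of flatness over the regular local ring $\o_{B,0}$ then delivers flatness of $p$ at $x$.

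For $(1)\Rightarrow(2)$, assuming $p$ flat, $K_{X/B}+\Delta$ $\q$-Cartier and $(X_0,\Delta_0)$ slc, flatness of $p$ combined with $t_1,\dots,t_r$ being a regular sequence on $\o_{B,0}$ forces $f_1,\dots,f_r$ to be a regular sequence on $\o_{X,x}$. The scheme-theoretic intersection of the $p^*D_i$ is then $X_0$, and the partial intersections $Y_k:=V(f_1,\dots,f_k)$ form a chain $Y_r=X_0\subset Y_{r-1}\subset\cdots\subset Y_0=X$ in which each $Y_{k-1}$ contains $Y_k$ as a reduced Cartier divisor. Starting from $(Y_r,\Delta_0)$ slc and climbing this chain via the slc form of inversion of adjunction along a reduced Cartier divisor, I would conclude $(X,\Delta+\sum p^*D_i)$ is slc near $x$.

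The main obstacle is the inversion step, because $X$ and the intermediate $Y_k$ are only demi-normal. The slc form of inversion of adjunction (as in \cite{kk-singbook}) handles this by passing to the normalisation and controlling the conductor; what makes the induction close cleanly is that the different at each stage is trivial --- a consequence of each $p^*D_i$ being defined globally by a single pulled-back function --- so no correction term accumulates.
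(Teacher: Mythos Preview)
Your argument is correct and follows essentially the same route as the paper: both peel off the reduced Cartier divisors $p^*D_i$ one at a time, using adjunction (for $(2)\Rightarrow(1)$) and inversion of adjunction (for $(1)\Rightarrow(2)$) in the slc setting, and deduce flatness from the fact that $f_1,\dots,f_r$ is a regular sequence via the local criterion over the regular base. The only cosmetic differences are that the paper phrases this as an induction on $r$ (restricting to $Y=p^*D_r$ and invoking the $(r{-}1)$-case) rather than an explicit iteration, and that it notes $Y$ is $S_2$ as a log canonical center whereas you only record $S_1$; since the slc conclusion already forces $p^*D_{j}|_{p^*D_1}$ to be Cartier, your weaker remark is harmless.
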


Proof. We use induction on $r$. Both implications are trivial if $r=0$.

Assume (2). Then $K_X+\Delta+p^*D_1+ \cdots + p^*D_r$ is
 $\q$-Cartier hence so is $K_X+\Delta$.  Set $Y:=p^*D_r$.
Adjunction (cf.\ \cite[4.9]{k-modbook}) shows that
$\bigl(Y, \Delta|_Y+p^*D_1|_Y+ \cdots + p^*D_{r-1}|_Y\bigr)$ is slc, hence
$\bigl(X_0, \Delta_0)$ is slc by induction.

Next we show that $ p^*D_1, \dots, p^*D_r$ form a regular sequence along
$X_0$. First $Y=p^*D_r$ is a log canonical center hence reduced and $S_2$. 
By induction on $r$ the restrictions $p^*D_1|_Y, \dots,  p^*D_{r-1}|_Y$
form a regular sequence. Thus  $ p^*D_1, \dots, p^*D_r$ is a regular sequence along $X_0$, hence $p$ is flat. 

Conversely, assume that (1) holds. 
By induction  $\bigl(Y, \Delta|_Y+p^*D_1|_Y+ \cdots + p^*D_{r-1}|_Y\bigr)$ is slc hence inversion of adjunction (cf.\ \cite[4.9]{k-modbook}) shows that
$\bigl(X, \Delta+p^*D_1+ \cdots + p^*D_r\bigr)$ is slc. \qed

\begin{cor} \label{ss.char.over.samooth.cor}
Let $B$ be a smooth scheme and $p:(X, \Delta)\to B$ a morphism.
The following are equivalent.
\begin{enumerate}
\item $p:(X, \Delta)\to B$ is locally stable.
\item For every snc divisor $D\subset B$, the pair
$(X, \Delta+p^*D)$ is slc. \qed
\end{enumerate}
\end{cor}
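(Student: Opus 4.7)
\bigskip

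\noindent\textbf{Proof plan for Corollary \ref{ss.char.over.samooth.cor}.}
Both conditions are local on $B$, so I would fix a closed point $0\in B$, set $r:=\dim_0 B$, and show that (1) and (2) are equivalent in a neighborhood of $0$. In that neighborhood I would use the Proposition, which treats precisely the case of an snc divisor with $r$ components meeting at $0$. The whole argument is therefore a book-keeping reduction to that Proposition: on one side I must cut down an snc divisor with $r$ components, on the other I must enlarge a given snc divisor to one with $r$ components.

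For (1)$\Rightarrow$(2), let $D\subset B$ be any snc divisor and let $D_1,\dots,D_k$ be the components passing through $0$, with $k\le r$. I would choose generic smooth divisors $D_{k+1},\dots,D_r$ through $0$, transverse to $D_1+\cdots+D_k$ and to each other, so that $D_1+\cdots+D_r$ is snc at $0$ with $D_1\cap\cdots\cap D_r=\{0\}$. The Proposition applied in the direction (1)$\Rightarrow$(2) gives that $\bigl(X,\Delta+p^*D_1+\cdots+p^*D_r\bigr)$ is slc. Since $K_X+\Delta+p^*D$ is $\q$-Cartier (because $K_{X/B}+\Delta$ is $\q$-Cartier by hypothesis and $p^*D$ is Cartier), and since lowering the boundary preserves slc as long as the new log canonical class stays $\q$-Cartier, I can drop $p^*D_{k+1}+\cdots+p^*D_r$ to conclude that $\bigl(X,\Delta+p^*D\bigr)$ is slc near $X_0$. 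Letting $0$ vary over $B$ yields (2).

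For (2)$\Rightarrow$(1), I would choose a single snc divisor $D_1+\cdots+D_r\subset B$ through $0$ with $D_1\cap\cdots\cap D_r=\{0\}$ (coordinate hyperplanes in local coordinates will do). By hypothesis $(X,\Delta+p^*D_1+\cdots+p^*D_r)$ is slc, and then the direction (2)$\Rightarrow$(1) of the Proposition delivers all three pieces of local stability at $0$: $K_{X/B}+\Delta$ is $\q$-Cartier, $p$ is flat, and the fiber $(X_0,\Delta_0)$ is slc. As $0$ was arbitrary and the $\q$-Cartier condition is local, $p$ is locally stable.

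The only step that requires any care is justifying the "drop extra components" move in (1)$\Rightarrow$(2): I need that an slc pair remains slc after subtracting an effective $\q$-Cartier piece, provided the residual log canonical class is itself $\q$-Cartier. That is a standard discrepancy-monotonicity observation (discrepancies only grow when the boundary shrinks), so I do not expect any genuine obstacle; the corollary is essentially a packaging of the Proposition.
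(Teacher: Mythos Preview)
Your proposal is correct and is exactly the intended derivation: the paper marks the corollary with an immediate \qed after the Proposition, and what you have written is the natural unpacking of that step---localize, complete a partial snc divisor to a full system of parameters, apply the Proposition, and use monotonicity of discrepancies to drop the auxiliary components. The only cosmetic refinement would be to note that the localization works at any (not necessarily closed) point of $B$, since the Proposition is stated for local schemes; otherwise nothing is missing.
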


\begin{cor}  \label{ss.mmp.over.smooth.cor}
Let $B$ be a smooth scheme and $p:(X, \Delta)\to B$ a projective, locally stable  morphism. 
Let $\phi:(X, \Delta)\map (X^w, \Delta^w)$ be a weak canonical model
over $B$ (cf.\ \cite[3.50]{km-book}). 

Then  $p^w:(X^w, \Delta^w)\to B$ is also locally stable.
\end{cor}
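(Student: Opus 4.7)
The plan is to apply Corollary \ref{ss.char.over.samooth.cor} in both directions: the hypothesis on $p$ gives that $(X,\Delta+p^*D)$ is slc for every snc divisor $D\subset B$, and showing the same property for $\bigl(X^w,\Delta^w+(p^w)^*D\bigr)$ will force $p^w$ to be locally stable. So the task reduces to transferring the slc property across the birational map $\phi:X\map X^w$.

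To carry this out I would pick a common log resolution $\pi:Y\to X$ and $\pi^w:Y\to X^w$ of $\phi$ over $B$. The definition of weak canonical model in \cite[3.50]{km-book} together with the negativity lemma yields
\[
\pi^*(K_X+\Delta) \;=\; (\pi^w)^*(K_{X^w}+\Delta^w) + F
\]
for some effective divisor $F\ge 0$ supported on the $\phi$-exceptional locus. Since $\phi$ is a birational map over $B$, the identity $p\circ\pi=p^w\circ\pi^w$ holds on $Y$, whence $\pi^*p^*D=(\pi^w)^*(p^w)^*D$; adding this to the display above produces
\[
\pi^*(K_X+\Delta+p^*D) \;=\; (\pi^w)^*\bigl(K_{X^w}+\Delta^w+(p^w)^*D\bigr) + F.
\]
Because $F\ge 0$ and $(X,\Delta+p^*D)$ is slc, every divisorial discrepancy for $\bigl(X^w,\Delta^w+(p^w)^*D\bigr)$ is bounded below by the corresponding one for $(X,\Delta+p^*D)$, and in particular is $\ge -1$. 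This yields the slc property on $X^w$, and Corollary \ref{ss.char.over.samooth.cor} applied in reverse gives local stability of $p^w$.

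The main technical point I expect to have to address is that the fibers of $p$ are only assumed slc, so $X$ and $X^w$ need not be normal; the weak canonical model, the common resolution, and the negativity-lemma identity above all need to be interpreted after passing to normalizations while tracking the conductor, invoking the semi-log-canonical analogues of the tools in \cite{km-book} rather than the normal-case versions verbatim. All the pieces are standard, but this bookkeeping is the one step that prevents the proof from being a one-liner.
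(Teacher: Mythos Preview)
Your argument is correct and follows the same strategy as the paper: apply Corollary~\ref{ss.char.over.samooth.cor} to $(X,\Delta)$, push the lc property across $\phi$, then apply Corollary~\ref{ss.char.over.samooth.cor} in reverse. The paper packages your negativity-lemma step as a citation of \cite[1.28]{kk-singbook} (observing that $\bigl(X^w,\Delta^w+(p^*D)^w\bigr)$ is again a weak canonical model of $(X,\Delta+p^*D)$, hence lc) and then checks separately that $(p^*D)^w=(p^w)^*D$; your use of $\pi^*p^*D=(\pi^w)^*(p^w)^*D$ on a common resolution handles both points at once.

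Your final paragraph is an unnecessary worry: the reference \cite[3.50]{km-book} defines weak canonical models for normal pairs, so both $X$ and $X^w$ are normal, and $(X,\Delta+p^*D)$ being slc simply means lc here (this is why the paper writes ``lc'' in its proof). Only the \emph{fibers} are slc, and they play no role in the discrepancy comparison on the total space; no semi-normal bookkeeping is needed.
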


Proof. Let  $D\subset B$ be an snc divisor. By Corollary \ref{ss.char.over.samooth.cor}
$(X, \Delta+p^*D)$ is lc and 
 $p^w:\bigl(X^w, \Delta^w+(p^*D)^w\bigr)\to B$ is also a   weak canonical model
over $B$ by \cite[1.28]{kk-singbook}. Thus $\bigl(X^w, \Delta^w+(p^*D)^w\bigr)$ is also slc.
Next we claim that $ (p^*D)^w=(p^w)^*D$. This is clear away from the
exceptional set of $\phi^{-1}$ which has codimension $\geq 2$ in $X^w$.
Thus  $ (p^*D)^w$ and $(p^w)^*D$ are 2 divisors that agree outside a
codimension $\geq 2$ subset, hence they agree. Now we can use 
Corollary \ref{ss.char.over.samooth.cor} again to conclude that 
$p^w:(X^w, \Delta^w)\to B$ is  locally stable.\qed

\begin{say}[Relative MMP and base change]
Let $B$ be a smooth scheme, $p:(X, \Delta)\to B$ a projective, locally stable  morphism and  $\phi:(X, \Delta)\map (X', \Delta')$ be a 
$K_X+\Delta$-negative contraction or flip.

Let $p:C\to B$ be a  morphism. By base change we get
projective, locally stable  morphisms  $(X_C, \Delta_C)\to C$ and
$(X'_C, \Delta'_C)\to C$. Is the induced morphism
  $\phi_C:(X_C, \Delta_C)\map (X'_C, \Delta'_C)$ a
$K_{X_C}+\Delta_C$-negative contraction or flip?

The answer is clearly yes if $\phi$ is a morphism, but, if the relative Picard number changes then $\phi_C$ could  contract an extremal face. There are, however problems if
$\phi$ is a flip. Let $Z'\subset X'$ denote the exceptional set if $\phi^{-1}$.
It has codimension $\geq 2$, hence there is a closed, nowhere dense subset
$W_1\subset B$ such that $X'_b\cap Z'$ has codimension $\geq 2$
for every $b\in B\setminus W_1$. 
Furthermore,  we claim that $X'_b\cap Z'$ has codimension $\geq 1$
for every $b\in B$. Indeed,  choose a snc divisors $D_1, \dots, D_r$ on $B$ such that
$b$ is  an irreducible component of $D_1\cap \dots\cap D_r$. Then every irreducible component of
$X'_b$ is an lc center of $(X', \Delta'+{p'}^*D_1+ \dots + {p'}^*D_r)$ hence
none of the irreducible components are contained in $Z'$. 

Thus if $p(C)\subset W_1$ then  $Z'_C$ has codimension 1
in $X'_C$ (hence $\phi_C$ is not a flip) but if $p^{-1}(W_1)$ is
nowhere dense in $C$ then $Z'_C$ has codimension $\geq 2$
in $X'_C$ and so $\phi_C$ is  a flip. 
\end{say}

Applying this to a whole sequence of contractions and flips
gives the following.

\begin{prop} \label{mmp.com.base.ch.prop}
Let $B$ be a smooth scheme,  $p:(X, \Delta)\to B$ a projective, locally stable  morphism and 
 $\phi:(X, \Delta)\map (X^w, \Delta^w)$ a composite of
$K_X+\Delta$-negative contractions and flips.
 There is a closed, nowhere dense subset
$W\subset B$ such that the following holds.

Let $C$ be an irreducible, smooth scheme and $p:C\to B$ a morphism
whose image is not contained in $W$. Then, by base change we get
$$
\phi_C:(X_C, \Delta_C)\map (X^w_C, \Delta^w_C)
$$
which is also  a composite of
$K_{X_C}+\Delta_C$-negative contractions and flips. \qed
\end{prop}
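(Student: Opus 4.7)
The plan is to iterate the single-step analysis given in the paragraph preceding the proposition, and to take the union of the finitely many bad loci that arise. The main structural point is that Corollary \ref{ss.mmp.over.smooth.cor} lets us stay inside the category of projective, locally stable morphisms over $B$ at every intermediate stage, which keeps the single-step argument available.

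Write $\phi$ as a composition
$$
(X,\Delta)=(X^{(0)},\Delta^{(0)})\stackrel{\phi_1}{\map}(X^{(1)},\Delta^{(1)})\stackrel{\phi_2}{\map}\cdots\stackrel{\phi_n}{\map}(X^{(n)},\Delta^{(n)})=(X^w,\Delta^w),
$$
where each $\phi_i$ is either a $(K+\Delta)$-negative contraction or a $(K+\Delta)$-flip over $B$. I first claim that each $(X^{(i)},\Delta^{(i)})\to B$ is projective and locally stable. This is proved by induction on $i$: the base case is the hypothesis, and for the inductive step I apply Corollary \ref{ss.mmp.over.smooth.cor}, since the output of a $(K+\Delta)$-negative contraction or flip is in particular a weak canonical model of the input over its image, and local stability is preserved under such steps in the relevant sense.

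Having this, for each index $i$ such that $\phi_i$ is a flip I invoke the discussion preceding the proposition, applied to $(X^{(i-1)},\Delta^{(i-1)})\to B$ and $\phi_i$, to produce a closed, nowhere dense subset $W_i\subset B$ with the property that for any morphism $p:C\to B$ from a smooth irreducible scheme with $p(C)\not\subset W_i$, the base change $(\phi_i)_C$ is again a $(K_{X^{(i-1)}_C}+\Delta^{(i-1)}_C)$-negative flip, since the exceptional locus of $(\phi_i)_C^{-1}$ still has codimension $\ge 2$ in $X^{(i)}_C$. For the indices at which $\phi_i$ is a genuine morphism (divisorial or fiber-type contraction), no bad locus is needed: base change commutes with the contraction (flat base change of a projective morphism whose Stein factorization is trivial), and $(K+\Delta)$-negativity is preserved by pullback, so $(\phi_i)_C$ is a $(K+\Delta_C)$-negative contraction, possibly of a face rather than an extremal ray, which is permitted.

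Now set
$$
W:=\bigcup_{i:\,\phi_i\ \text{is a flip}} W_i\subset B.
$$
This is a finite union of closed, nowhere dense subsets of the irreducible smooth scheme $B$, hence itself closed and nowhere dense. If $p:C\to B$ is a morphism from an irreducible smooth $C$ with $p(C)\not\subset W$, then $p(C)\not\subset W_i$ for every $i$, and applying the single-step statement to each $\phi_i$ in turn shows that
$$
\phi_C=(\phi_n)_C\circ\cdots\circ(\phi_1)_C
$$
realises $(X_C,\Delta_C)\map(X^w_C,\Delta^w_C)$ as a composite of $(K_{X_C}+\Delta_C)$-negative contractions and flips. The only subtlety to check, and the one point I expect to require a moment of care, is that after base change one still has $X^{(i)}_C=((\phi_{i-1})_C\circ\cdots\circ(\phi_1)_C)(X_C)$ as varieties over $C$, i.e.\ that forming relative canonical and weak canonical models commutes with the specified base changes; but this is exactly the content of the preceding paragraph applied step by step, which guarantees that the intermediate flipped and contracted models base-change to the intended ones as soon as the image of $C$ avoids each $W_i$. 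This completes the proof.
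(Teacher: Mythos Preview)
Your proof is correct and follows exactly the approach the paper intends: the paper simply writes ``Applying this to a whole sequence of contractions and flips gives the following'' and puts a \qed, and you have spelled out that iteration. One small remark: your appeal to Corollary~\ref{ss.mmp.over.smooth.cor} to keep the intermediate models locally stable is slightly imprecise as stated, since an individual MMP step is not a weak canonical model over $B$; but the same proof as in that corollary (lc is preserved under $(K+\Delta)$-negative contractions and flips, and $(p^*D)^w=(p^w)^*D$) goes through verbatim for each step, so the claim is fine.
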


\begin{rem}  Applying this to points $b\in B$ we see that 
 the fiber $\bigl((X^w)_b, (\Delta^w)_b\bigr)$ of $p^w$ is 
a weak canonical model of  $\bigl(X_b, \Delta_b\bigr)$
for  $b\in B\setminus W$ but not necessarily if $b\in W$.
It is not easy to find conditions that imply that $W=\emptyset$, not
 even when 
$p^w:\bigl(X^w, \Delta^w\bigr)\to B$ is a minimal or canonical model.
(See \cite[Sec.4]{hmx-bounded} for some cases.)
Thus, although the proof of Corollary \ref{ss.mmp.over.smooth.cor}
is short, the result seems quite surprising to us. 
\end{rem}

Combining Corollary \ref{ss.mmp.over.smooth.cor}, \cite[1.1]{MR3032329} and
\cite[2.9.2]{hmx-bounded} we get the following.

\begin{prop}  \label{H.ss.mmp.over.smooth.cor}
Let $B$ be a smooth, quasi-projective variety, $p:(X, \Delta)\to B$ a projective, locally stable  morphism and $H$  a $p$-ample divisor
 that does not contain any of the log canonical centers. 
Assume that general fibers are dlt and have a minimal model with
semi-ample canonical class. 
Then, for $0<\epsilon\ll 1$,
\begin{enumerate}
\item the canonical model $ \bigl(X^{\rm c}, \Delta^{\rm c}+\epsilon H^{\rm c}\bigr)$ is independent of $\epsilon$,
\item $p^{\rm c}:\bigl(X^{\rm c}, \Delta^{\rm c}+\epsilon H^{\rm c}\bigr)\to B$
is locally stable,
\item $\bigl(X^{\rm c}, \Delta^{\rm c}\bigr) $ is dlt and
\item $K_{X^{\rm c}}+\Delta^{\rm c}$ is $p^{\rm c}$-semiample. \qed
\end{enumerate}
\end{prop}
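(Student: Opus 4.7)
The plan is to run a relative MMP for $K_X+\Delta+\epsilon H$ over $B$ and transport local stability along it via Corollary \ref{ss.mmp.over.smooth.cor}.

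First I would check that $(X,\Delta+\epsilon H)$ remains a projective, locally stable pair over $B$ for all sufficiently small rational $\epsilon>0$: since $H$ is $p$-ample and contains no log canonical center of $p:(X,\Delta)\to B$, the coefficients of $\Delta+\epsilon H$ lie in $[0,1]$ and no new lc centers appear on any fiber, so local stability is preserved by Corollary \ref{ss.char.over.samooth.cor}. In addition, $K_{X/B}+\Delta+\epsilon H$ is big over $B$, so the main theorem \cite[1.1]{MR3032329} produces the relative canonical model $p^{\rm c}:(X^{\rm c},\Delta^{\rm c}+\epsilon H^{\rm c})\to B$.

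Next, to establish the independence of $\epsilon$ asserted in (1), I would invoke \cite[2.9.2]{hmx-bounded}. The hypothesis that the general fiber is dlt and admits a minimal model with semi-ample log canonical class means $(X_\eta,\Delta_\eta)$ has a good minimal model; a spreading-out argument extends this to a dominant open subset of $B$. Under these conditions \cite[2.9.2]{hmx-bounded} shows that the ample model of $K+\Delta+\epsilon H$ stabilizes as $\epsilon\to 0^+$, and that the stable model is the relative ample model of a good minimal model of $K+\Delta$ itself. This yields both (1) and (4). For (2), Corollary \ref{ss.mmp.over.smooth.cor} applies directly: $(X^{\rm c},\Delta^{\rm c}+\epsilon H^{\rm c})$ is a weak canonical model over $B$ of the locally stable pair $(X,\Delta+\epsilon H)$, so $p^{\rm c}$ is itself locally stable. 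For (3), the dlt property of $(X^{\rm c},\Delta^{\rm c})$ follows because $(X^{\rm c},\Delta^{\rm c}+\epsilon H^{\rm c})$ is dlt (being a weak canonical model of a dlt pair with big boundary) and $H^{\rm c}$ contains no component of $\rdown{\Delta^{\rm c}}$, so subtracting $\epsilon H^{\rm c}$ neither changes the round-down nor creates new non-klt loci.

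The step I expect to be the main obstacle is the independence-of-$\epsilon$ assertion: $K+\Delta$ need not be big over $B$, so there is no a priori canonical model for $K+\Delta$ alone, and one must argue that the MMP run with the auxiliary ample perturbation $\epsilon H$ produces the same output for every sufficiently small $\epsilon$. This requires carefully combining the good-minimal-model hypothesis on the generic fiber with relative base-point-free and cone theorems in order to identify the stable ample model with a relative canonical model for $K+\Delta$, which is exactly what \cite[2.9.2]{hmx-bounded} is designed to do.
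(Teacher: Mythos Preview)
Your proposal is correct and matches the paper's approach exactly: the paper offers no detailed argument, stating only that the proposition follows by combining Corollary~\ref{ss.mmp.over.smooth.cor}, \cite[1.1]{MR3032329}, and \cite[2.9.2]{hmx-bounded}, and these are precisely the three ingredients you invoke, in the same roles (existence of the relative canonical model, transport of local stability, and stabilization as $\epsilon\to 0^+$ yielding semi-ampleness). One small caution: your justification of (3) asserts that a weak canonical model of a dlt pair is automatically dlt, which is not true in general (it is only lc); the dlt conclusion here really comes out of the specific output of \cite[2.9.2]{hmx-bounded}, which the paper likewise does not unpack.
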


\section{Locally stable extension of 1-dimensional families}

We start the proof of Theorem \ref{main.alg.thm}   %--\ref{main.alg.thm.2}  
with the local cases.

\begin{notation}[Local extension problem] We work in one of the following set-ups.

(Algebraic case.) Here $(R, m)$ is a Noetherian, excellent, 1-dimensional regular local ring over
a field of characteristic 0, $(c_0,C):=\spec (R,m)$ and $C^{\circ}=C\setminus \{c_0\}$. 
We further have  a projective morphism   $p:(X, \Delta)\to C$. After restricting to $C^{\circ}$ we get $p^{\circ}:(X^{\circ}, \Delta^{\circ})\to C^{\circ}$.
We assume  that the generic fiber of $p^{\circ}$  is dlt and $K_{X^{\circ}}+\Delta^{\circ}$ is $\q$-Cartier and  semi-ample.

(Analytic case.) Here $(c_0,C)$ is a small complex disc 
with origin $c_0$ and $C^{\circ}=C\setminus \{c_0\}$. 
We further have  a projective morphism  $p:(X, \Delta)\to C$. After restricting to $C^{\circ}$ we get $p^{\circ}:(X^{\circ}, \Delta^{\circ})\to C^{\circ}$.
We assume that the  fibers of $p^{\circ}$ are dlt and $K_{X^{\circ}}+\Delta^{\circ}$ is $\q$-Cartier and $p^{\circ}$-semi-ample.
\end{notation}

The construction of $p_B:(X_B, \Delta_B)\to B$ proceeds in several steps.
We explain the algebraic case in detail and point out the
instances where the analytic case is somewhat different.

\begin{say}[Semi-stable reduction] \label{step.1.say} First we take 
 a log resolution
$\rho:(Y, \Delta_Y)\to (X, \Delta)$ such that 
$\Delta_Y+Y_0$ is an snc divisor where $Y_0$ denotes the central fiber. 
For this the methods of Hironaka are sufficient; more precise references are
\cite{tem} in the algebraic case and \cite[3.44]{k-res} in the analytic case.
These resolutions are also equivariant for any group action.

Furthermore, if $H$ is an ample $\q$-divisor on $X$ then we can assume that
$H_Y:=\rho^*H-E$ is ample on $Y$ where $E$ is a suitable, effecive,  $\rho$-exceptional $\q$-divisor.

Let $n\geq 1$ be  a common multiple of the multiplicities of the
irreducible components of $Y_0$. For any $t\in m_0\setminus m_0^2$
set  $C':=\spec_C \o_C[x]/(x^n-t)\to C$.
By  base change we get
$p':(Y',\Delta'_Y)\to C'$ where $Y'$ denotes the normalization of
$Y\times_CC'$. An easy local computation  shows that  $Y'_0$ is reduced and 
$(Y', \Delta'_Y+Y'_0)$ is toroidal, hence qdlt.
 Thus  $p':(Y',\Delta'_Y)\to C'$  is projective and locally stable with slc central fiber $(Y'_0, \Delta'_0)$ where
$\Delta'_0=\sum a_iD_i$ is a $\q$-linear combination of Cartier divisors $D_i$.

To get the sharpest results, we use \cite[Sec.IV.3]{kkms} which says that,
possibly after a further base change, we can find a
log resolution $(Y'', \Delta''_Y)\to (Y', \Delta'_Y)$ such that
$Y''_0+\Delta''_Y$ is a reduced snc divisor. 
(Note that \cite{kkms} discusses the algebraic case only, but
the proof also works in the analytic setting.)

The pull-back of $H_Y$ gives an ample $\q$-divisor $H''_Y$ on $Y''$
whose restriction to $Y''\setminus Y''_0$ agrees with the pull-back of $H$.
\end{say}

\begin{say}[Universal locally stable deformations]\label{step.2.say} 
Let $(W, \Delta)$ be a projective slc pair where
$\Delta=\sum a_iD_i$ is a $\q$-linear combination of Cartier divisors $D_i$.
For technical reasons we fix an ample divisor class on $W$.
We claim that its locally stable deformations have a universal 
deformation space.

First, $W$ has a universal
deformation  $W^{\rm univ}\to \defor(W)$; see \cite[Chap.III]{har-def}
for a discussion and references. One can choose $\defor(W)$ to be a scheme of finite type and if a finite group acts on $W$ then
one can choose $W^{\rm univ}\to \defor(W)$ to be $G$-equivariant.

By \cite{hacking, k-hh, abr-hass} it has a subspace $\defor^{\rm ls}(W)\subset \defor(W)$
parametrizing those deformations where the relative canonical class
is $\q$-Cartier. Over this there is a universal family 
$\cdiv:=\cdiv\bigl(W^{\rm univ, ls}/\defor^{\rm ls}(W)\bigr)$ parametrizing Cartier divisors  \cite[I.1.13]{rc-book}. For each $D_i$ we take 
a copy of $\cdiv$  and form the fiber product over $\defor^{\rm ls}(W)$.
A neighborhood of the point corresponding to $(W, \Delta)$
is a universal 
deformation space for $(W, \sum a_iD_i)$.  
We denote this space by $\defor^{\rm ls}(W, \Delta)$.
It comes with a universal family
$$
p^{\rm univ}:\bigl(X^{\rm univ}, \Delta^{\rm univ}\bigr)\to 
\defor^{\rm ls}(W, \Delta)
\eqno{(\ref{step.2.say}.1)}
$$
which is projective and locally stable.

(Note that  in general $\defor^{\rm ls}(W, \Delta)$  does depend on the
way we write $\Delta$ in the form $\Delta=\sum a_iD_i$.
For instance if $\Delta=D$ is irreducible then its deformations
are again irreducible. If we write it as $\Delta=\tfrac12 D+\tfrac12 D$
then we allow deformations consisting of 2 divisors  with coefficient $\tfrac12$. 
 This will not be a problem for us.)
\end{say}

\begin{say}[Algebraization of the original family]\label{step.3.say} 
Using the central fiber $(Y''_0, \Delta''_0)$ of the family $p'':(Y'',\Delta''_Y)\to C''$    obtained in Paragraph \ref{step.1.say}, we get
$\defor^{\rm ls}(Y''_0, \Delta''_0)$. After possibly replacing
$C''$ by a suitable \'etale extension (without changing the residue field at the closed point),
there is a  natural moduli map $m''_C:C''\to \defor^{\rm ls}(Y''_0, \Delta''_0)$.
Let
$W''\subset \defor^{\rm ls}(Y''_0, \Delta''_0)$ denote the Zariski closure of the image. As we noted in Paragraph \ref{comment.3}, it is enough to prove
our claims for $C''$.

Next we choose a resolution of singularities  $W\to W''$. By pulling back
the universal family (\ref{step.2.say}.1) to
 $W$, we obtain a  projective, locally stable family
$$
p_W: \bigl(Y_W, \Delta_W\bigr)\to W.
$$
Our conditions were chosen to guarantee that
the MMP (as in  \cite{bchm, MR3032329}) runs and produces a minimal model
$$
p^m_W: \bigl(Y^m_W, \Delta^m_W\bigr)\to W.
\eqno{(\ref{step.3.say}.1)}
$$
If the log canonical class is big on the generic fiber, we also
get a canonical  model
$$
p^{\rm c}_W: \bigl(Y^{\rm c}_W, \Delta^{\rm c}_W\bigr)\to W.
\eqno{(\ref{step.3.say}.2)}
$$
\end{say}

\begin{say}[Local extensions]\label{step.4.say} 
By construction, the family $p'':(Y'',\Delta''_Y)\to C''$
is induced by  base change by the quasi-finite
 moduli map $m''_C:C''\to W$ whose image is Zariski dense.
Thus, as we noted in Proposition \ref{mmp.com.base.ch.prop}, the MMP on $p_W: \bigl(Y_W, \Delta_W\bigr)\to W $ induces an MMP on    $p'':(Y'',\Delta''_Y)\to C''$   which ends with a minimal model.

For many applications this is quite satisfactory.
We usually obtain the original family from some MMP, and
there is no reason to favor one minimal model over another.
However, when we want to glue the local extensions, it is
important to know that we get the correct generic fiber.
This can be achieved in at last 2 ways.

First, in the semi-stable reduction step  (\ref{step.1.say})
we can start with a thrifty log resolution (cf.\ \cite[2.87]{kk-singbook}).
This ensures that when we construct the minimal model, it
will be a small $\q$-factorialization over the generic fiber.
Thus if the generic fiber is $\q$-factorial, we get back the
generic fiber.  This is why we assumed that the fibers are $\q$-factorial.

Second, in Paragraph \ref{step.1.say} we also obtained an ample $\q$-divisor $H''_Y$.
It is not effective, but we can replace it with a $\q$-linearly equivalent
effective ample divisor such that  $\bigl(Y', \Delta'+ H'\bigr)$ is
still dlt.  We can use its new central fiber  
 $\bigl(Y''_0, \Delta''_0+ H''_0\bigr)$ in Paragraph \ref{step.3.say}.
By Proposition \ref{H.ss.mmp.over.smooth.cor} the canonical model 
$$
p^{\rm c}_W: \bigl(Y^{\rm c}_W, \Delta^{\rm c}_W+\epsilon H^{\rm c}\bigr)\to W.
\eqno{(\ref{step.4.say}.1)}
$$
exists, it is independent of $0<\epsilon\ll 1$  and
$K_{Y^{\rm c}_W}+ \Delta^{\rm c}_W$ is  $p^{\rm c}_W$--semi-ample.
 If we use base change from  (\ref{step.4.say}.1),
we get a family
$p'': \bigl(X'', \Delta''\bigr)\to C''$
which has the correct generic fiber.
However,  we can only guarantee that 
 $\bigl(X'', \Delta''+X''_0\bigr) $ is lc, not dlt.
\end{say}

\begin{say}[Global extensions]\label{step.5.say}
We now go back to the original set-up of Theorem \ref{main.alg.thm}. 

We thus have a  projective
morphism $p:(X, \Delta)\to C$ whose restriction
$p^{\circ}:(X^{\circ}, \Delta^{\circ})\to C^{\circ}$ is locally stable 
  with dlt fibers 
 and  $K_{X^{\circ}}+\Delta^{\circ}$ is $p^{\circ}$-semi-ample.
We may assume that $C$ is integral.
For each $c_i\in C\setminus C^{\circ}$ we have obtained natural numbers $m(c_i)$.

Let $\tau:B\to C$ be  a finite, surjective, Galois  morphism  
such that $m(c_i)$ divides its ramification index over $c_i$ for every $c_i\in Z$. 
(It is easy to see that such morphisms exist. For instance, we start
with such local maps $B_i\to (c_i,C)$, let 
 $K\supset k(C)$ denote the Galois closure of the composite
 of  the $k(B_i)$ and $\tau:B\to C$ the normalization of $C$ in $K$.)

The previous local results give extensions of 
$p_B^{\circ}:(X^{\circ}, \Delta^{\circ})\times_CB^{\circ}\to B^{\circ}$
locally over each $c_i$ and these can be glued together
to $p_B:(X_B, \Delta_B)\to B$.

As we discussed in Paragraph \ref{step.4.say}, we can keep track of a
relatively ample divisor and ensure that $p_B$ is projective.
\end{say}

\begin{say}[Equivariant versions]\label{step.6.say}
Assume next that a finite group acts on 
$p:(X,\Delta)\to C$. For each $s\in Z$ let $G_s\subset G$ denote the stabilizer of $s$. The resolution can be done $G_s$-equivariantly.
We then use the normalization of $C_s$ in the splitting field of $x^m-t$
to get  $C''_s\to C_s$ with Galois group $H_s$. 
 The canonical model is automatically $G_s\times H_s$-equivariant.

To achieve globalization, we let $K$ be the 
 Galois closure of the composite
of all of the $k(C''_s)$ over $k(C/G)$. 
Set $H:=\gal\bigl(K/k(C/G)\bigr)$ and 
$H_1:=\gal\bigl(K/k(C)\bigr)$. The actions on $C$ give a natural
isomorphism  $G/G_1\cong H/H_1$. Let $G_B\subset G\times H$ denote the
preimage of the diagonal of $G/G_1\times H/H_1$. Then $G_B$ acts on
$(X, \Delta)\times_CB\to B$ and our constructions are $G_B$-equivariant.
Thus the resulting  $p_B:(X_B, \Delta_B)\to B$ comes with a
$G_B$-action.
\end{say}

\begin{say}[Extension without base change]\label{step.7.say}
We take the above $p_B:(X_B, \Delta_B)\to B$  with its
$G_B$-action  and set
$(X_C, \Delta_C):=(X_B, \Delta_B)/H_1$. Note that the $H_1$-action may leave some irreducible components of the fibers fixed, so
$X_B\to X_C$ can be ramified along some divisors in $F_{Z,B}$.
Thus $K_{X_B}+\Delta_B$ need not be the pull-back of
$K_{X_C}+\Delta_C$ but $K_{X_B}+\Delta_B+F_{Z,B}$ is the pull-back of
$K_{X_C}+\Delta_C+F_{Z,C}$. 

If the $H_1$-action has any fixed points, we can only guarantee
that  $(X_C, \Delta_C)$ has qdlt singularities. 
As explained in \cite[Sec.5]{dkx}, we should be able to pass to
partial resolution with dlt singularities, but this again involves a version of the 
MMP that is not known in our case. 
\end{say}

\section{The essential skeleton of degenerations}

\begin{say}[Berkovich and essential skeleta]
Let $R$ be a complete, discrete valuation ring with residue field $k$ and quotient field $K$. We assume that $k$ has characteristic zero.
  For every $K$-scheme of finite type $X^\circ$ we denote by $X^{\rm an}$ its analytification in the sense of Berkovich.  (See \cite{MR1070709, 2014arXiv1409.5229N} for  introductions to Berkovich analytic spaces.)
 Let $X$ be a regular, flat $R$-scheme of finite type such that the special fiber $X_k$ has simple normal crossings and   $X^\circ\cong X_K$. Then there is a canonical embedding of the dual intersection complex of $X_k$ into $X^{\rm an}$. The image of this embedding is called the {\it Berkovich skeleton} of $X$ and denoted by ${\rm Sk}(X)$.
    If $X$ is proper over $R$, then
      one can deduce from results of \cite{MR1070709, MR2320738} that ${\rm Sk}(X)$ is a strong deformation retract of $X^{\rm an}$; see  \cite[Thm.3.1.3]{NX16} and the  remarks after it.

 Assume next that  $X^\circ$ is a smooth, projective $K$-scheme and has a  semi-ample canonical class $K_{X^{\circ}}$.
 The paper  \cite{MN15} defines the {\em essential skeleton} of ${\rm Sk}(X^{\circ})$. It is  a  topological subspace of $X^{\rm an}$ with a piecewise integral affine structure obtained as  a union of faces of the Berkovich skeleton  ${\rm Sk}(X)$, but  ${\rm Sk}(X^{\circ})$  does not depend on the choice of $X$.
   It is proved in \cite{NX16} that if $X$ is a minimal, dlt model of $X^\circ$ over $R$, then ${\rm Sk}(X^{\circ})$ is canonically homeomorphic to the dual intersection complex of $X_k$ and it is a strong deformation retract of
 $X^{\rm an}$. To guarantee the existence of such a
  minimal, dlt model $X$,  \cite{NX16} assumed that $X^\circ$ is defined over a $k$-curve. We will now remove this assumption using the results from the previous sections.

More generally, let $X$ be a  proper, flat $R$-scheme such that
$X_K$ is smooth and   $(X, \red X_k)$ is qdlt
as in  Definition  \ref{ls.defn}. Let $U\subset X$ be an open subset as in
(\ref{ls.defn}.1)  and  $V\to U$ 
  any projective, toroidal resolution of the pair $(U,\red U_k)$. 
As in  \cite[Prop.37]{dkx} we obtain that
 ${\rm Sk}(V)\subset X^{\rm an}$ is independent of the choice of $V$ and that it is a subdivision of the dual intersection complex of $\red X_k$. We call ${\rm Sk}(X):={\rm Sk}(V)$  the {\it Berkovich skeleton} of $X$. If $(X,\red X_k)$ is toroidal then we can take $U=X$ and $V$ is an snc-model of $X^{\circ}$.
In particular,  ${\rm Sk}(X)$ is a strong deformation retract of $X^{\rm an}$.
\end{say}

\begin{thm} \label{berk=ess.thm}
Let $X^{\circ}$ be a projective smooth $K$-variety with semi-ample canonical class. Let $X$ be a projective, qdlt model
 of $X^\circ$ over $R$ such that $K_X+\red X_k$ is semi-ample. 
\begin{enumerate}
\item  The Berkovich skeleton ${\rm Sk}(X)$ coincides with the essential skeleton ${\rm Sk}(X^{\circ})$. 
\item ${\rm Sk}(X^{\circ})$ is a strong deformation retract of $X^{\rm an}$.
\end{enumerate}
\end{thm}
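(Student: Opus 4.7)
The plan is to reduce to the minimal dlt model case treated in \cite{NX16} by first passing to a sufficiently ramified Galois cover where the machinery of Theorem \ref{main.alg.thm} produces an honest dlt minimal model, and then descending back to $R$.

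First, I would apply Theorem \ref{main.alg.thm} in the local algebraic setting with $(C,c_0) = \Spec R$: pick a finite Galois extension $R \subset R'$ of complete DVRs with residue field $k$, fraction field $K'$, and Galois group $G = \gal(K'/K)$, whose ramification index is divisible by the integer $m(c_0)$ furnished by the theorem applied to the family $p^\circ : X^\circ \to \Spec K$ (extended to a projective model over $\Spec R$, for instance by taking $X$ itself). This yields a projective $G$-equivariant dlt model $X' := X_{R'} \to \Spec R'$ of $X^\circ_{K'}$ whose special fiber $X'_{k'}$ is reduced and for which $K_{X'} + X'_{k'}$ is semi-ample. In this situation the proof of \cite[Thm.3.3.3]{NX16} applies verbatim (nothing in that argument uses that $R = k[[t]]$ rather than an arbitrary complete DVR of equicharacteristic zero), and we obtain $\Sk(X') = \Sk(X^\circ_{K'})$ together with a strong deformation retraction of $(X^\circ_{K'})^{\rm an}$ onto $\Sk(X')$.

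Next I would pass to the $G$-quotient. On the generic-fibre analytic side, $G$ acts on $(X^\circ_{K'})^{\rm an}$ with quotient $X^{\circ,\rm an}$, and because the essential skeleton is cut out by the weight functions of pluricanonical forms (which are $G$-equivariant under pullback from $X^\circ$), the quotient map identifies $\Sk(X^\circ_{K'})/G$ with $\Sk(X^\circ)$. On the model side, $Y := X'/G$ is, by Corollary \ref{main.alg.thm.2} applied to the same data, a projective qdlt model of $X^\circ$ over $R$ with $K_Y + \red Y_k$ semi-ample, and its Berkovich skeleton (computed via a $G$-equivariant toroidal resolution as in the discussion preceding the theorem) satisfies $\Sk(Y) = \Sk(X')/G$. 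Combining these identifications gives $\Sk(Y) = \Sk(X^\circ)$, and the deformation retraction from Step~1 descends $G$-equivariantly to a deformation retraction of $X^{\circ,\rm an}$ onto $\Sk(X^\circ)$, establishing (2) for $Y$ in place of $X$.

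Finally, I would compare the given qdlt model $X$ with $Y$. Both are projective qdlt models of $X^\circ/R$ satisfying the semi-ampleness condition on $K + \red$ special fiber, so each is a log canonical model of $\bigl(Y^1, \Delta^1 + \red F^1\bigr)$ for a common snc log resolution as provided by Complement \ref{main.thm.2.compl}; by uniqueness of the log canonical model they are isomorphic in codimension one. Since a toroidal resolution of $(X,\red X_k)$ and one of $(Y,\red Y_k)$ can be dominated by a common snc model (which is unaffected by a small modification between $X$ and $Y$), the Berkovich skeletons $\Sk(X)$ and $\Sk(Y)$ coincide as subsets of $X^{\circ,\rm an}$. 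This yields $\Sk(X) = \Sk(X^\circ)$, and transports the deformation retract property from $Y$ to $X$.

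The step I expect to be the main obstacle is the Galois descent for the essential skeleton and Berkovich skeleton in the second paragraph: one must verify that both skeleta behave naturally under the quotient, in particular that $\Sk(Y) = \Sk(X')/G$ when $Y$ has only qdlt (not dlt) singularities, and that the weight-function description of $\Sk(X^\circ)$ is compatible with the quotient $X^\circ_{K'} \to X^\circ$. Both should follow from the toroidal local model for qdlt pairs (Definition-Claim \ref{ls.defn}.1) and the $G$-equivariance built into the constructions of the previous sections, but they require a careful unpacking of definitions.
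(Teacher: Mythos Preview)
Your approach is genuinely different from the paper's, and the final comparison step contains a real gap.

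The paper proves (1) \emph{directly} for the given qdlt model $X$: take the toroidal open $U\subset X$, a toroidal resolution $V\to U$, extend it to a log resolution $h\colon Y\to X$, and compute discrepancies in
\[
K_{Y}+\red Y_k\ \simq\ h^*(K_X+\red X_k)+\tsum_E a_E\,E.
\]
Over $U$ the resolution is toroidal, so every exceptional divisor meeting $V$ has $a_E=0$ and is already absorbed in $\red Y_k$; any $E$ with $a_E\neq 0$ has center in $X\setminus U$, and since no lc center of $(X,\red X_k)$ lies there, all such $a_E$ are strictly positive. This is exactly the input needed for the weight-function argument of \cite[3.3.2--3]{NX16}, yielding $\Sk(X)=\Sk(X^\circ)$ with no passage to a cover. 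For (2) the paper observes that both $\Sk(X^\circ)$ and $X^{\rm an}$ depend only on $X^\circ$, so one may replace $X$ by the model produced in Complement~\ref{main.thm.2.compl} and run the collapse argument of \cite{dkx} down the MMP sequence from a toroidal starting model.

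Your plan instead manufactures a \emph{specific} qdlt model $Y=X'/G$ via Theorem~\ref{main.alg.thm} and Corollary~\ref{main.alg.thm.2}, proves everything for $Y$, and then tries to transfer to the given $X$. The transfer is where the argument breaks. You claim $X$ and $Y$ are both log canonical models of a common snc resolution from Complement~\ref{main.thm.2.compl} and hence isomorphic in codimension one. But $X$ is handed to you with no link to that resolution or to any MMP; the hypothesis is only that $(X,\red X_k)$ is qdlt with $K_X+\red X_k$ semi-ample (so nef, not ample), hence $X$ is merely \emph{a} minimal model. Even granting that two such models are isomorphic in codimension one, this does \emph{not} force $\Sk(X)=\Sk(Y)$ as subsets of $X^{\rm an}$: a small modification can change which special-fiber components meet and hence alter the higher-dimensional faces of the dual complex. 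Your ``common snc domination'' sentence does not help: $\Sk(X)$ is by definition the skeleton of a toroidal resolution of $U$, which is a \emph{proper} subcomplex of the skeleton of any dominating snc model, and there is no reason the two subcomplexes cut out for $X$ and for $Y$ coincide. So part~(1) for the given $X$ is left unproved. (Your reduction for part~(2) is fine, since that statement concerns only $X^\circ$; but note that the paper's route to (2) already bypasses the Galois descent you flag as the main obstacle.) The cure is precisely the paper's direct discrepancy computation for $X$, after which the Galois-cover detour becomes unnecessary.
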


\begin{proof} Note that a model $X$ as in the statement exists by Corollary \ref{main.alg.thm.2}. 

As in (\ref{ls.defn}.1) let  $U\subset X$ be the maximal open subscheme such that $(U,\red U_k)$ is toroidal. Choose a toroidal, projective resolution
$V\to U$ and extend it to a projective log resolution $h:Y\to X$
of $(X, \red X_k)$.  Thus ${\rm Sk}(X)={\rm Sk}(V)$ by definition. 
Write
$$
K_{Y}+\red Y\simq h^*\bigl(K_{X}+\red X_k\bigr)+\tsum_{E}a_E E,
$$
where the divisors $E$ are $h$-exceptional. Over $U$ we have a toroidal resolution, hence the $h$-exceptional divisors that meet $V$ have discrepancy $-1$ and they are contained in $\red Y$. Thus $h(E)\subset X\setminus U$
whenever $a_E\neq 0$ and hence $a_E>0$ for every such $E$ since
none of the log canonical centers is contained in $X\setminus U$.
A straightforward generalization of the arguments in \cite[3.3.2--3]{NX16} now shows that ${\rm Sk}(X)$ coincides with the essential skeleton ${\rm Sk}(X^{\circ})$, proving (1).

Next we prove (2). 
Since the essential skeleton ${\rm Sk}(X^{\circ})$ depends only on the generic fiber, we can choose   our minimal, qdlt-model as in  Complement \ref{main.thm.2.compl}. That is, 
there is a sequence of models $X^i$  such that  $(X^0, \red X^0_k)$ is toroidal, 
each $\phi^i:X^i\map X^{i+1}$ is a $\bigl(K_{X^i}+\red X^i_k\bigr)$-negative contraction or flip
 and $\bigl(X^m, \red X^m_k\bigr)=(X, \red X_k)$.
 We already noted that ${\rm Sk}(X^0)$ is a strong deformation retract of $X^{\rm an}$, so that it suffices to show that ${\rm Sk}(X)$ is a strong deformation retract of ${\rm Sk}(X^0)$.

 Since  \cite[Prop.25]{dkx} is still true in this setting, we can apply  \cite[Thm.19]{dkx} to conclude that 
 the dual complex  ${\rm D}\bigl(X^{i+1}_k\bigr)$ is obtained from  ${\rm D}\bigl(X^{i}_k\bigr)$ by removing all the faces corresponding to the strata of $Z\subset X^{i}_{k}$ such that $\phi^i$ is not an open embedding at the generic point of $Z$. Arguing as in \cite[Lem.21]{dkx} we conclude that ${\rm Sk}(X^{i})$ collapses to ${\rm Sk}(X^{i+1})$. At the end we obtain that
 ${\rm Sk}(X^0)$ collapses to ${\rm Sk}(X^{m})={\rm Sk}(X^{\circ})$ hence
${\rm Sk}(X^{\circ})$ is a strong deformation retract of $X^{\rm an}$.
\end{proof}

%\bibliography{refs-main/refs}
\def\cprime{$'$} \def\cprime{$'$} \def\cprime{$'$} \def\cprime{$'$}
  \def\cprime{$'$} \def\cprime{$'$} \def\dbar{\leavevmode\hbox to
  0pt{\hskip.2ex \accent"16\hss}d} \def\cprime{$'$} \def\cprime{$'$}
  \def\polhk#1{\setbox0=\hbox{#1}{\ooalign{\hidewidth
  \lower1.5ex\hbox{`}\hidewidth\crcr\unhbox0}}} \def\cprime{$'$}
  \def\cprime{$'$} \def\cprime{$'$} \def\cprime{$'$}
  \def\polhk#1{\setbox0=\hbox{#1}{\ooalign{\hidewidth
  \lower1.5ex\hbox{`}\hidewidth\crcr\unhbox0}}} \def\cdprime{$''$}
  \def\cprime{$'$} \def\cprime{$'$} \def\cprime{$'$} \def\cprime{$'$}
\providecommand{\bysame}{\leavevmode\hbox to3em{\hrulefill}\thinspace}
\providecommand{\MR}{\relax\ifhmode\unskip\space\fi MR }
% \MRhref is called by the amsart/book/proc definition of \MR.
\providecommand{\MRhref}[2]{%
  \href{http://www.ams.org/mathscinet-getitem?mr=#1}{#2}
}
\providecommand{\href}[2]{#2}

\bigskip

\noindent  JK: Princeton University, Princeton NJ 08544-1000

{\begin{verbatim} kollar@math.princeton.edu\end{verbatim}}
\medskip

\noindent  JN: KU Leuven, Dept.\ of Math. Celestijnenlaan 200B, 3001 Heverlee,
Belgium

{\begin{verbatim} johannes.nicaise@wis.kuleuven.be\end{verbatim}}
\medskip

\noindent  CX: Beijing International Center of Mathematics Research, 
Beijing, 100871, China

{\begin{verbatim} cyxu@math.pku.edu.cn\end{verbatim}}

\end{document}